\newtheorem{thm}{Theorem}[section]
\newtheorem{lemma}{Lemma}[section]
\newtheorem{rem}{Remark}[section]
\theoremstyle{definition}
\theoremstyle{remark}
\newcommand{\R}{{\mathbb R}}
\newcommand{\N}{{\mathbb N}}
\newcommand{\noi}{\noindent}
\numberwithin{equation}{section}
\begin{document}
\title[KS-ZK]	
{Multi-dimensional Kuramoto-Sivashinsky-Zakharov-Kuznetsov  equation posed on admisssible multi-dimensional domains}
\author{N. A. Larkin}	
\address
{
	Departamento de Matem\'atica, Universidade Estadual
	de Maring\'a, Av. Colombo 5790: Ag\^encia UEM, 87020-900, Maring\'a, PR, Brazil
}

\thanks
{
MSC 2010:35Q35; 35Q53.\\
	Keywords: Kuramoto-Sivashinsky equation; Zakharov-Kuznetsov equation; Global solutions; Decay in  bounded Domains
}
\bigskip

\email{ nlarkine@uem.br;nlarkine@yahoo.com.br }
\date{}

\begin{abstract} An initial- boundary value problem for the n-dimensional ($n$ is a natural number from the interval [2,7]) Kuramoto-Sivashinsky-Zakharov-Kuznetsov equation posed on  smooth bounded  domains in $\R^n$  was considered. The existence and uniqueness of  global  regular  solutions  as well as their exponential decay  have been established. A connection between an order of the stationary part of the equation and  admissible dimensions of a domain has been revealed.
\end{abstract}

\maketitle

\section{Introduction}\label{introduction}
In this work, we study the existence, uniqueness, regularity  and  exponential decay  of global solutions to an initial-boundary value problem for the $n$-dimensional Kuramoto-Sivashinsky-Zakharov-Kuznetsov  equation (KS-ZK) 
\begin{align}
	& \phi_t+\Delta^2 \phi+ \Delta \phi+\Delta \phi_{x_1} +\frac{1}{2}|\nabla \phi|^2=0
\end{align}
with the purpose to reveal a connection between an order of the stationary part of the KS-ZK equation and admiissible dimensions of domains involved.
Here  $n$ is a natural number from the interval [2,7], $\Delta$ and $\nabla$ are the Laplacian and the gradient in $\R^n.$
In \cite{kuramoto}, Kuramoto studied the turbulent phase waves  and Sivashinsky in \cite{sivash} obtained an asymptotic equation which modeled the evolution of a disturbed plane flame front. See also \cite{Cross}.
Mathematical  results on  initial and initial-boundary value problems for one-dimensional (1.1)  are presented in \cite{benney,Iorio, Guo,cousin,feng,Larkin,temam1,temam2,zhang}, see references  there for more information. Multi-dimensional problems for equations of (1.1) type can be found in \cite{kukavica,gramchev,Guo,massat,molinet,temam1,sell,temam2}, where some results on existence, regularity and nonlinear stability of solutions have been presented. In one-dimensional case,  Kuramoto-Sivashinsky type equations which included the (KdV) term $\phi_{xxx}$ have been considered \cite{benney,Iorio,feng, Larkin, zhang}. The aim of these studies were intentions to understand simultaneous influences of dispersion and dissipation on stability of  systems modeled by these equations. In \cite{Larkin}, an initial-boundary value problem for the KdV equation was regularized by initial-boundary value problems for the Kuramoto-Sivashinsky equation with a consequent passage to the limit with respect to regularization parameters.\par In multi-dimensional cases, $\phi_{xxx}$ must be replaced by $\Delta \phi_{x_1}$, Zakharov-Kuznetsov term, see \cite{zk}, where the propagation of nonlinear ionic-sonic waves in a plasma submitted to a magnetic field directed along the $x_1$-axis was studied.This Kuramoto-Sivashinsky-Zakharov-Kuznetsov (KS-ZK)
equation may be useful in studies of nonlinear stability of a three-dimensional viscous film flowing down an inclined surface; see \cite{topper}, where it has been observed that the three-dimensional structure of waves can suppress an instability due to presence of the fourth-order derivative term.\par There is an extensive literature on existence, uniqueness and stability of global solutions for both ZK equation, see \cite{dor,fam,temam2} and for the Kuramoto-Sivashinsky equation, see \cite{Guo,feng,Larkin2,Larkin4,molinet,zhang}.  Recently, appeared published papers on solvability of initial-boundary value problems for two-dimensional and three-dimensional Kuramoto-Sivashinsky-Zakharov-Kuznetsov equation in bounded and unbounded domains, see \cite{Larkin1,Larkin3}. In \cite{Larkin2},  an initial-boundary value problem for the n-dimensional Kuramoto-Sivashinsky equation posed on smooth domains in $\R^n,\;n\in [2,7]$ has been studied\par This motivated us to consider n-dimensional Kuramoto-Sivashinsky-Zakharov-Kuznetsov equation and to define natural $n$ which allow to prove the existence, uniqueness and exponential decay of global regular solutions. Consequently, we call (1.1) Kuramoto-Sivashinsky-Zakharov-Kuznetsov equation (KS-ZK).\\
For $n$ dimensions, (1.1) can be rewritten in the form of the following system:

 \begin{align}
	&(u_j)_t+\Delta^2 u_j+\Delta u_j +(\Delta u_{j})_{x_1}+\frac{1}{2}\sum_{i=1}^n(u_i)^2_{x_j}=0,   \\
	&(u_i)_{x_j}=(u_j)_{x_i},\;i\ne j,\;i,j=1,...,n,
\end{align}
where $u_j=\phi_{x_j}.$ 
First essential problem that arises while one studies stability either for (1.1) or for (1.2)-(1.3) is a destabilizing effect of 
$\Delta u_j $ that may be damped by a dissipative term $\Delta^2 u_j$ provided a domain has some specific properties. Naturally, so called "thin domains"  appear  where some dimensions are small while the others may be  large, see \cite{Iftimie,sell}.\par   Our approach is based on the Faedo-Galerkin method and simple,  transparent conditions  for the geometry of a domain which suppress instability. These conditions were obtained using Steklov`s inequalities. \par Second essential problem is presence of semi-linear terms in (1.2) which are interconnected. Differently from the one-dimensional case, this does not allow to obtain the first estimate independent of time and solutions and leads to a connection between geometric properties of a domain and initial data.\\
Our work has the following structure: Section I is Introduction. Section 2 contains notations and auxiliary facts. In Section 3, formulation of an initial-boundary value problem for (1.2)-(1.3) posed on  bounded smooth domains in $\R^n$ with boundary conditions of a solution  and the Laplace operator of the solution  on  boundaries of domains are given. The existence and uniqueness of a global regular  solution, exponential decay of the $H^2$-norm  have been established.  Moreover, a "smoothing" effect has been observed  here.
 Section 4 contains conclusions.

\section{Notations and Auxiliary Facts}

let $D_n$ be a suffuciently smooth domain in $\R^n$ satisfying
the Cone condition, \cite{Adams}, and $x=(x_1,...,x_n) \in D_n,$ where $n$ is a fixed natural number $n\in [2,7].$ We use the standard notations of Sobolev spaces $W^{k,p}$, $L^p$ and $H^k$ for functions and the following notations for the norms \cite{Adams}
for scalar functions $f(x,t):$
$$\| f \|^2 = \int_{D_n} | f |^2dx, \hspace{1cm} \| f \|_{L^p(D_n)}^p = \int_{D_n} | f  |^p\, dx,$$
$$\| f \|_{W^{k,p}(D_n)}^p = \sum_{0 \leq \alpha \leq k} \|D^\alpha f \|_{L^p(D_n)}^p, \hspace{1cm} \| f \|_{H^k(D_n)} = \| f \|_{W^{k,2}(D_n)}.$$

When $p = 2$, $W^{k,p}(D_n) = H^k(D_n)$ is a Hilbert space with the scalar product 
$$((u,v))_{H^k(D_n)}=\sum_{|j|\leq k}(D^ju,D^jv),\;
\|u\|_{L^{\infty}(D_n)}=ess\; sup_{D_n}|u(x)|.$$
We use a notation $H_0^k(D_n)$ to represent the closure of $C_0^\infty(D_n)$, the set of all $C^\infty$ functions with compact support in $D_n$, with respect to the norm of $H^k(D_n)$.

\begin{lemma}[Steklov's Inequality \cite{steklov}] Let $v \in H^1_0(0,L).$ Then
	\begin{equation}\label{Estek}
	\frac {\pi^2}{L^2}\|v\|^2 \leq \|v_x\|^2.
	\end{equation}
\end{lemma}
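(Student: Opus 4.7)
The inequality \eqref{Estek} is the sharp one-dimensional Poincar\'e--Friedrichs inequality for functions vanishing at both endpoints, and the constant $\pi^2/L^2$ is precisely the first eigenvalue of $-\partial_{xx}$ on $(0,L)$ with homogeneous Dirichlet boundary data. I would therefore argue spectrally, expanding $v$ in the Dirichlet eigenbasis and reading off the estimate from the spectral gap.

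First, by density of $C_0^\infty(0,L)$ in $H_0^1(0,L)$ and by the continuity of both sides of \eqref{Estek} as functionals on $H^1(0,L)$, it suffices to establish the inequality for $v \in C_0^\infty(0,L)$ and then pass to the limit. For such a $v$ introduce the $L^2$-orthonormal system $e_k(x) = \sqrt{2/L}\,\sin(k\pi x/L)$, $k \geq 1$, which is a complete basis of $L^2(0,L)$ and consists of Dirichlet eigenfunctions of $-\partial_{xx}$ with eigenvalues $\lambda_k = (k\pi/L)^2$. Writing $v = \sum_{k\geq 1} a_k e_k$ with $a_k = \int_0^L v\, e_k\, dx$, Parseval gives $\|v\|^2 = \sum_k a_k^2$. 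Integrating by parts (the boundary terms vanish because $v(0)=v(L)=0$) and applying Parseval once more to the pairing of $v$ and $v_{xx}$, one obtains
\begin{equation*}
\|v_x\|^2 = -\int_0^L v\, v_{xx}\, dx = \sum_{k\geq 1} \lambda_k\, a_k^2 \;\geq\; \lambda_1 \sum_{k\geq 1} a_k^2 = \frac{\pi^2}{L^2}\,\|v\|^2.
\end{equation*}

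There is no essential obstacle: the whole estimate reduces to the elementary observation that $\lambda_1 = \pi^2/L^2$ is the minimum of the Dirichlet spectrum of $-\partial_{xx}$ on $(0,L)$. The only mildly delicate point is the justification of the term-by-term identity for $\|v_x\|^2$, which for $v \in C_0^\infty(0,L)$ is immediate from classical Fourier-series convergence (each of $v$, $v_x$, $v_{xx}$ lies in $L^2(0,L)$ and its sine or cosine expansion converges in the $L^2$ norm), and which then extends to all of $H_0^1(0,L)$ by the density step above.
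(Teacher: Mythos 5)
Your proof is correct. Note that the paper itself offers no proof of this lemma --- it is stated as a classical result with only the citation to Steklov's 1896 paper --- so there is nothing internal to compare against. Your spectral argument (expansion in the Dirichlet sine eigenbasis $e_k=\sqrt{2/L}\,\sin(k\pi x/L)$, Parseval, and the observation that $\lambda_1=\pi^2/L^2$ is the bottom of the Dirichlet spectrum of $-\partial_{xx}$) is the standard derivation of the sharp one-dimensional Poincar\'e constant, and your handling of the two mildly delicate points --- the density reduction to $C_0^\infty(0,L)$ and the justification of the term-by-term identity $\|v_x\|^2=\sum_k\lambda_k a_k^2$ via integration by parts --- is complete and accurate.
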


\begin{lemma}
	[Differential form of the Gronwall Inequality]\label{gronwall} Let $I = [t_0,t_1]$. Suppose that functions $a,b:I\to \R$ are integrable and a function $a(t)$ may be of any sign. Let $u:I\to \R$ be a positive differentiable function satisfying
	\begin{equation}
		u_t (t) \leq a(t) u(t) + b(t),\text{ for }t \in I\text{ and } \,\, u(t_0) = u_0,
	\end{equation}
	then
	$$u(t) \leq u_0 e^{ \int_{t_0}^t a(\tau)\, d\tau } + \int^t_{t_0} e^{\int_{t_0}^s a(r) \, dr} b(s) ds$$\end{lemma}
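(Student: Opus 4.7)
The plan is to establish this differential form of Gronwall by the classical integrating factor method, mirroring the variation-of-parameters formula for the linear ODE $u'=a(t)u+b(t)$.

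First, I would introduce the integrating factor
\[
\mu(t)=\exp\!\left(-\int_{t_0}^t a(\tau)\,d\tau\right),
\]
which is a strictly positive, absolutely continuous function on $I$ with $\mu(t_0)=1$ and $\mu'(t)=-a(t)\mu(t)$ almost everywhere. Since $\mu(t)>0$, multiplying the hypothesis $u_t(t)\le a(t)u(t)+b(t)$ through by $\mu(t)$ preserves the direction of the inequality, and the product rule then identifies the left-hand side as an exact derivative:
\[
(\mu u)'(t)=\mu(t)\,u_t(t)-a(t)\mu(t)\,u(t)\le \mu(t)\,b(t).
\]

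Next, I would integrate this scalar inequality over $[t_0,t]$. Using $\mu(t_0)u(t_0)=u_0$, the fundamental theorem of calculus gives
\[
\mu(t)u(t)\le u_0+\int_{t_0}^t \mu(s)\,b(s)\,ds.
\]
Finally, divide through by $\mu(t)>0$, equivalently multiply by $\exp\!\left(\int_{t_0}^t a(\tau)\,d\tau\right)$, to obtain
\[
u(t)\le u_0\,e^{\int_{t_0}^t a(\tau)\,d\tau}+\int_{t_0}^t e^{\int_s^t a(r)\,dr}\,b(s)\,ds,
\]
which is the stated estimate (the exponent $\int_{t_0}^s a$ appearing in the displayed formula of the lemma is evidently a transcription slip for $\int_s^t a$, since that is what the computation produces and what correctly reduces to $u_0+\int b$ in the trivial case $a\equiv 0$).

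There is no serious obstacle: the only point deserving care is that $a,b$ are assumed merely integrable rather than continuous. This is harmless, however, because $u$ itself is assumed differentiable, so $\mu u$ is absolutely continuous on $I$ and the fundamental theorem of calculus applies to it directly; the integration step above therefore requires no additional regularity. The entire proof reduces to the two-line calculation displayed.
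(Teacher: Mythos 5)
Your proof is correct and is essentially the paper's own argument, which consists of the single line ``multiply by the integrating factor and integrate from $t_0$ to $t$'' (you even fix the paper's sign slip: the integrating factor must be $e^{-\int_{t_0}^t a(\tau)\,d\tau}$, not $e^{+\int_{t_0}^t a(\tau)\,d\tau}$). Your observation that the exponent $\int_{t_0}^s a(r)\,dr$ in the stated conclusion should read $\int_s^t a(r)\,dr$ is also right; that is a transcription error in the lemma as printed.
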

\begin{proof} Multiply (2.2) by the integrating factor $e^{\int_{t_0}^t a(\tau)d\tau}$ and integrate the result from $t_0$ to $t.$
\end{proof} 
The next Lemmas will be used in  estimates:

\begin{lemma}[See: \cite{friedman}, Theorem 9.1]  Let  $n$ be a natural number from the interval $[2,7];  \;D_n$ be a sufficiently smooth bounded domain in $\R^n$ satisfying the Cone condition and $v \in H^4(D_n)\cap H^1_0(D_n)$.  then
	for all natural $n\in [2,7]$ 
	\begin{equation}
		\sup_{D_n}|v(x)|\leq C_n\| v\|_{H^4(D_n)}.
	\end{equation}
	The constant $C_n$ depends on $n, D_n.$	
\end{lemma}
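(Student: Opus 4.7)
The statement is essentially a Sobolev embedding result, namely $H^4(D_n) \hookrightarrow L^\infty(D_n)$, valid precisely when $4 > n/2$, i.e.\ $n \leq 7$; this accounts for the range $n \in [2,7]$ appearing in the hypotheses. My plan is therefore to derive the inequality as a special case of the general Sobolev embedding theorem for domains satisfying the Cone condition, exactly as in Friedman's Theorem 9.1.

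First, I would recall the general Sobolev embedding theorem: if $D_n \subset \R^n$ is a bounded domain satisfying the Cone condition and $k p > n$, then $W^{k,p}(D_n) \hookrightarrow C^0(\overline{D_n})$ continuously, with an estimate of the form $\sup_{D_n}|v(x)| \leq C \|v\|_{W^{k,p}(D_n)}$ where $C = C(n,k,p,D_n)$. I would then specialize to $p=2$ and $k=4$, observing that the condition $kp>n$ becomes $8 > n$, which is satisfied for every natural $n \in [2,7]$. This immediately yields
\begin{equation*}
\sup_{D_n}|v(x)| \leq C_n \|v\|_{H^4(D_n)},
\end{equation*}
with $C_n$ depending on $n$ and the geometry of $D_n$ (through the Cone parameters).

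The $H^1_0$ assumption is not needed for the pure embedding bound; it is included because in later applications one wants to control $\|v\|_{H^4}$ by lower-order norms via Poincaré-type inequalities compatible with the boundary conditions. For the present statement it is harmless.

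The only delicate point is the dependence on the geometry of $D_n$: the Cone condition is what allows one to construct, in a neighbourhood of each boundary point, a locally uniform extension (or to apply integral representations of Sobolev-type directly in $D_n$), so that the embedding constant $C_n$ is finite. No extra work beyond invoking Theorem 9.1 of Friedman is required; the hardest conceptual step is merely verifying the index condition $2k > n$, which selects the admissible range $n \in [2,7]$ for the fourth-order theory. Accordingly the proof reduces to this verification and a citation.
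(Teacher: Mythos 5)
Your proposal is correct and coincides with the paper's treatment: the paper gives no proof of this lemma beyond citing Friedman's Theorem 9.1, and your argument is precisely that citation made explicit, namely the Sobolev embedding $W^{k,p}(D_n)\hookrightarrow C^0(\overline{D_n})$ for cone-condition domains with $kp>n$, specialized to $k=4$, $p=2$, so that $8>n$ selects exactly $n\in[2,7]$. Your remarks that the $H^1_0$ hypothesis is not needed for the embedding itself and that the cone condition is what makes the constant finite are both accurate.
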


\begin{lemma}\label{lemma3}
	Let $f(t)$ be a continuous  positive function such that
	
	\begin{align} & f'(t) + (\alpha - k f^n(t)) f(t) \leq 0,\;t>0,\;n\in\N,\label{lemao1}\\
		& \alpha - k f^n(0)> 0,\;k>0.\label{lemao2}\end{align}
	\noi Then
	
	\begin{equation}f(t) < f(0)\end{equation}
	
	\noi for all $t > 0$.
\end{lemma}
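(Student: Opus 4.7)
The plan is to proceed by contradiction, exploiting the fact that the strict inequality $\alpha - kf^n(0) > 0$ combined with positivity of $f$ forces $f'(0) < 0$, so $f$ is strictly decreasing in a right neighbourhood of $0$. Then I would propagate this behaviour by a barrier-style argument along the level $f(0)$.

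First, I would evaluate the differential inequality at $t = 0$ to obtain
\[
f'(0) \leq -\bigl(\alpha - kf^n(0)\bigr)f(0) < 0,
\]
both factors on the right being strictly positive by hypothesis. Continuity then supplies a $\delta > 0$ such that $f(t) < f(0)$ on $(0,\delta]$.

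Next, I would define
\[
T = \sup\{\,t > 0 : f(s) < f(0) \text{ for all } s \in (0,t]\,\},
\]
so that $T \geq \delta > 0$, and argue that $T = \infty$. Assuming $T < \infty$, the definition of $T$ together with continuity of $f$ forces $f(T) = f(0)$. Substituting into (2.4),
\[
f'(T) \leq -\bigl(\alpha - kf^n(T)\bigr)f(T) = -\bigl(\alpha - kf^n(0)\bigr)f(0) < 0,
\]
so for some $\epsilon > 0$ and every $s \in (T-\epsilon, T)$ one has $f(s) > f(T) = f(0)$, contradicting the definition of $T$. Hence $T = \infty$ and $f(t) < f(0)$ for all $t > 0$.

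The argument is short and essentially elementary; the one delicate point — which I expect to be the main place to be careful — is the contradiction at $t = T$. It crucially uses that the quantity $\alpha - kf^n(\cdot)$ remains strictly positive precisely at the level $f = f(0)$, which is why the hypothesis $\alpha - kf^n(0) > 0$ is imposed as a strict inequality rather than a mere $\geq 0$. Everything else is continuity, and the positivity of $f$ is used only to guarantee that the product $(\alpha - kf^n(0))f(0)$ is strictly positive at $t = 0$.
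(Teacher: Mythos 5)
Your argument is correct and follows the same continuity/first-crossing scheme as the paper's proof: establish $f(t)<f(0)$ near $t=0$, let $T$ be the first time $f$ returns to the level $f(0)$, and derive a contradiction from the strict positivity of $\alpha-kf^n$ at that level. The only difference is the final step --- you use the pointwise sign $f'(T)<0$ to force $f(s)>f(0)$ just before $T$, whereas the paper integrates the inequality over $[0,T]$ and invokes the positivity of $\int_0^T(\alpha-kf^n(t))f(t)\,dt$ (which rests on the same observation that $f\le f(0)$ keeps $\alpha-kf^n>0$); both routes are valid.
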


\proof{  Obviously, $f'(0) + (\alpha - k f^n(0)) f^n(0) \leq 0$. Since $f$ is continuous, there exists $T>0$ such that $f(t)<f(0)$ for every $t \in [0,T)$. Suppose that  $f(0) = f(T)$. Integrating \eqref{lemao1}, we find
	
	$$f(T)+ \int_0^T(\alpha - k f^n(t)) f(t) \, dt \leq f(0).$$
	
	\noi Since	$$  \int_0^T(\alpha - k f^n(t)) f(t) \, dt >0,$$ then $f(T)<f(0).$ This  contradicts that $f(T)=f(0).$ Therefore, $f(t) < f(0)$ for all $t > 0.$ \\
The proof of Lemma 2.4 is complete.  $\Box$

\section{ KS-ZK system posed on  bounded smooth domains in $\R^n$. Special basis }

 Let $\Omega_n$ be the minimal nD-parallelepiped containing a given bounded smooth domain $\Bar{D_n}\in\R^n,\;n=2,...,7$:
$$\Omega_n =\{ x\in \R^n; x_i\in (0,L_i)\},\; \;i=1,...,n.$$ Fixing any natural $n=2,...,7,$  consider  in $Q_n=D_n\times (0,t)$  the following initial-boundary value problem:

	\begin{align}
	(u_j)_t+\Delta^2 u_j+\Delta u_j +(\Delta u_{j})_{x_1}+\frac{1}{2}\sum_{i=1}^n(u_i)^2_{x_j}=0; &  \\
	(u_i)_{x_j}=(u_j)_{x_i},\;i\ne j;&\\
	u_j|_{\partial D_n}=\Delta  u_j|_{\partial D_n}=0,\; t>0;&\\
	u_j(x,0)=u_{j0}(x),\;j=1,...,n;\;x \in D_n.
\end{align}

 \begin{lemma}
	Let $f\in H^4(D_n)\cap H^1_0(D_n),\;\Delta f|_{\partial D_n}=0$.  Then
	\begin{align}
		&a\|f\|^2\leq\|\nabla f\|^2,\;\;a^2\|f\|^2\leq \|\Delta f\|^2,\;\;a\|\nabla f\|^2\leq \|\Delta f\|^2,\\
		& a^2\|\Delta f\|^2\leq \|\Delta^2 f\|^2,\;\;a\|\Delta f_{x_1}\|^2\leq \|\Delta^2 f\|^2,\;   \text{where} \; a=\sum_{i=1}^n\frac{\pi^2}{L_i^2}.
	\end{align}	
\end{lemma}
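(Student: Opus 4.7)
All five estimates follow by chaining one geometric input (Lemma 2.1 in each coordinate direction) with integration by parts; no fine elliptic regularity on $D_n$ is needed.

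For the first inequality $a\|f\|^2\leq\|\nabla f\|^2$, I would extend $f$ by zero from $D_n$ to the surrounding parallelepiped $\Omega_n$; because $f|_{\partial D_n}=0$, this extension lies in $H_0^1(\Omega_n)$ and, for each $i$, almost every one-dimensional slice in the $x_i$ direction belongs to $H_0^1(0,L_i)$. Applying Lemma 2.1 slice-by-slice and integrating over the remaining coordinates yields $(\pi^2/L_i^2)\|f\|^2\leq\|f_{x_i}\|^2$; summing over $i=1,\dots,n$ gives the asserted inequality.

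Next, for $f\in H^2(D_n)\cap H_0^1(D_n)$ a standard integration by parts produces $\|\nabla f\|^2=-(f,\Delta f)\leq \|f\|\,\|\Delta f\|$, with no boundary contribution since $f$ vanishes on $\partial D_n$. Combining this with the first inequality gives $a\|f\|^2\leq\|f\|\,\|\Delta f\|$, whence $a\|f\|\leq\|\Delta f\|$, which is the second inequality. Feeding this bound on $\|f\|$ back into $\|\nabla f\|^2\leq\|f\|\,\|\Delta f\|$ yields the third inequality, $a\|\nabla f\|^2\leq\|\Delta f\|^2$.

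The two $\Delta^2$-estimates come essentially for free by re-running the previous step with $g:=\Delta f$ in place of $f$. The hypothesis $\Delta f|_{\partial D_n}=0$, combined with $f\in H^4(D_n)$, guarantees $g\in H^2(D_n)\cap H_0^1(D_n)$, exactly the setting used above. The second inequality applied to $g$ gives $a^2\|\Delta f\|^2\leq\|\Delta^2 f\|^2$, while the third applied to $g$ gives $a\|\nabla\Delta f\|^2\leq\|\Delta^2 f\|^2$; since $\|(\Delta f)_{x_1}\|^2\leq\|\nabla\Delta f\|^2$ trivially, the fifth estimate follows as a corollary. The only point requiring honest care is the very first step, namely verifying that the zero-extension really does sit in $H_0^1(\Omega_n)$ with the $L^2$-norm of the gradient preserved so that Lemma 2.1 can be invoked on one-dimensional slices; once this is granted, every subsequent line is a Cauchy--Schwarz or an integration by parts with vanishing boundary contribution.
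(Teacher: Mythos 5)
Your proposal is correct and follows essentially the same route as the paper: zero-extension to the parallelepiped $\Omega_n$ plus Steklov's inequality for the first estimate, then integration by parts and Cauchy--Schwarz for the rest. The only cosmetic difference is that you obtain the two $\Delta^2$-estimates by applying the already-proved inequalities to $g=\Delta f$ (legitimate, since $\Delta f|_{\partial D_n}=0$ puts $g$ in $H^2(D_n)\cap H^1_0(D_n)$), whereas the paper redoes the integrations by parts $\|\Delta f\|^2=(\Delta^2 f,f)$ and $\|\nabla\Delta f\|^2=-(\Delta^2 f,\Delta f)$ explicitly; the underlying computation is identical.
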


\begin{proof} By definition,
	$$\|\nabla f\|^2=\sum_{i=1}^n\|f_{x_i}\|^2.$$
Extending $f\in H^1_0(D_n)$ by $0$ into the minimal parallelepiped $\Omega_n$ as $\Bar{f}$, making use of Steklov`s inequalities in $\Omega_n$ and taking into account that $\|\nabla \Bar{f}\|_{\Omega_n}=\|\nabla f\|_{D_n},$  we get
	$$\|\nabla f\|^2\geq \sum_{i=1}^n\frac{\pi^2}{L_i^2}\|f\|^2=a\|f\|^2.$$
	On the other hand,
	$$a\|f\|^2\leq \|\nabla f\|^2 =-\int_{D_n}f\Delta fdx\leq \|\Delta f\|\|f\|.$$
	This implies
	$$a\|f\|\leq \|\Delta f\|\;\;\text{and} \;\;a^2\|f\|^2\leq \|\Delta f\|^2.$$
	Consequently, \;$a\|\nabla f\|^2\leq \|\Delta f\|^2.$  Similarly,
	$$\|\Delta f\|^2=(\Delta^2 f,f)\leq \|\Delta^2 f\|\|f\|\leq\frac{1}{a}\|\Delta^2 f\|\|\Delta f\|,$$
	$$\|\Delta f_{x_1}\|^2\leq \|\nabla\Delta f\|^2=-(\Delta^2f,\Delta f)\leq\|\Delta^2 f\|\|\Delta f\|\leq\frac{1}{a}\|\Delta^2 f\|^2.$$
	Assertions of Lemma 3.1 follow from these inequalities.
	 	Proof of Lemma 3.1 is complete.
\end{proof}
\begin{rem} Assertions of Lemma 3.1 are true if the function $f$ is replaced respectively by $u_j,\;j=1,...,n$.
\end{rem}

\begin{lemma} Let $a>1$. In conditiions of Lemma 3.1,
	\begin{align}\|f\|^2(t)_{H^2(D_n)}\leq 3\|\Delta f\|^2(t), &\\
		\|f\|^2(t)_{H^4(D_n)}\leq 5\|\Delta^2f\|^2(t), &\\
		\sup_{D_n}|f(x)|\leq C_s\| \Delta^2 f\|,\; \text{where}\;C_s=5C_n.
	\end{align}
\end{lemma}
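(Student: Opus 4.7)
The plan is to establish the three inequalities in sequence, bootstrapping from Lemma 3.1. For the first bound $\|f\|^2_{H^2(D_n)} \leq 3\|\Delta f\|^2$, I decompose by derivative order,
$$\|f\|^2_{H^2(D_n)} = \|f\|^2 + \|\nabla f\|^2 + \sum_{|\alpha|=2}\|D^\alpha f\|^2,$$
and bound each piece: Lemma 3.1 gives $\|f\|^2 \leq a^{-2}\|\Delta f\|^2$ and $\|\nabla f\|^2 \leq a^{-1}\|\Delta f\|^2$, while the classical integration-by-parts identity for $f\in H^2\cap H^1_0(D_n)$ on a smooth domain yields $\sum_{|\alpha|=2}\|D^\alpha f\|^2 \leq \|\Delta f\|^2$. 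Since $a>1$, the coefficients satisfy $a^{-2}+a^{-1}+1 < 3$, which proves the first bound.

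For the second inequality $\|f\|^2_{H^4(D_n)} \leq 5\|\Delta^2 f\|^2$, I would iterate the same idea through order four. The crucial new ingredient is that $g:=\Delta f$ also belongs to $H^1_0(D_n)$ thanks to the condition $\Delta f|_{\partial D_n}=0$, so Lemma 3.1 applied to $g$ gives $\|\nabla\Delta f\|^2 \leq a^{-1}\|\Delta^2 f\|^2$. Using the analogous integration-by-parts identities $\sum_{|\alpha|=3}\|D^\alpha f\|^2 \leq \|\nabla\Delta f\|^2$ and $\sum_{|\alpha|=4}\|D^\alpha f\|^2 \leq \|\Delta^2 f\|^2$, together with the chain $\|f\|^2 \leq a^{-4}\|\Delta^2 f\|^2$, $\|\nabla f\|^2 \leq a^{-3}\|\Delta^2 f\|^2$, $\sum_{|\alpha|=2}\|D^\alpha f\|^2 \leq a^{-2}\|\Delta^2 f\|^2$ from repeated application of Lemma 3.1, the geometric sum $\sum_{k=0}^{4}a^{-k} < 5$ for $a>1$ delivers the bound. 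Finally, the third inequality follows by plugging the second into Lemma 2.3:
$$\sup_{D_n}|f(x)| \leq C_n\|f\|_{H^4(D_n)} \leq \sqrt{5}\,C_n\|\Delta^2 f\| \leq 5\,C_n\|\Delta^2 f\|,$$
giving $C_s = 5C_n$.

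The main obstacle I expect is rigorously justifying the integration-by-parts identities $\sum_{|\alpha|=k}\|D^\alpha f\|^2 \leq \|\Delta^{k/2}f\|^2$ (and the analogue $\sum_{|\alpha|=k}\|D^\alpha f\|^2 \leq \|\nabla\Delta^{(k-1)/2}f\|^2$ for odd $k$) in the generality of cone-condition domains, since these hinge on the sign of the boundary integrals involving the second fundamental form of $\partial D_n$. Once those identities are in hand, the rest of the proof is a direct chain of Lemma 3.1 estimates combined with the elementary observation that $\sum_{k=0}^{N}a^{-k} < N+1$ whenever $a>1$.
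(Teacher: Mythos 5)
Your computation is essentially the paper's: iterate Lemma~3.1 to get the chain $\|f\|^2\le a^{-4}\|\Delta^2f\|^2$, $\|\nabla f\|^2\le a^{-3}\|\Delta^2f\|^2$, $\|\Delta f\|^2\le a^{-2}\|\Delta^2f\|^2$, $\|\nabla\Delta f\|^2\le a^{-1}\|\Delta^2f\|^2$ (the last one using that $\Delta f\in H^1_0$), sum the geometric series $\sum_{k=0}^4 a^{-k}<5$ for $a>1$, and feed the result into Lemma~2.3 to get $C_s=5C_n$ (indeed $\sqrt5\,C_n$ would do). The one place you diverge is exactly the step you flag as your ``main obstacle.'' The paper sidesteps it by writing $\|f\|^2_{H^4(D_n)}=\|f\|^2+\|\nabla f\|^2+\|\Delta f\|^2+\|\nabla\Delta f\|^2+\|\Delta^2 f\|^2$, i.e.\ by tacitly replacing the Sobolev norm defined in Section~2 (the sum of $\|D^\alpha f\|^2$ over all multi-indices $|\alpha|\le4$) with the iterated-Laplacian expression; no equivalence of norms is proved or even mentioned. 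You instead work with the genuine norm and therefore need the identities $\sum_{|\alpha|=2}\|D^\alpha f\|^2\le\|\Delta f\|^2$, $\sum_{|\alpha|=3}\|D^\alpha f\|^2\le\|\nabla\Delta f\|^2$, $\sum_{|\alpha|=4}\|D^\alpha f\|^2\le\|\Delta^2 f\|^2$.

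Those identities are a genuine gap as stated, and you are right about the reason: for $f\in H^2\cap H^1_0(D_n)$ one has $\sum_{i,j}\|f_{x_ix_j}\|^2=\|\Delta f\|^2-\int_{\partial D_n}H\,|\partial_\nu f|^2\,dS$ with $H$ the mean curvature of $\partial D_n$, so the inequality holds with constant $1$ only when the boundary term has the right sign (e.g.\ convex domains); on a general smooth domain satisfying merely the cone condition the constants $3$ and $5$ are not justified, and elliptic regularity only gives $\|f\|_{H^4}\le C(D_n)\|\Delta^2 f\|$ with an unspecified constant. So your proposal is incomplete at that step --- but it is incomplete at precisely the point where the paper's own proof is silent, and it has the merit of making the hidden hypothesis explicit. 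To make either argument rigorous one must either restrict to convex domains (where your boundary terms are favorable), or accept domain-dependent constants in (3.7)--(3.9), which would then propagate into $C_s$ and condition (3.11) of Theorem~3.1. Everything else in your write-up (the coefficient bookkeeping, the application of Lemma~2.3, the rounding $\sqrt5\,C_n\le 5C_n$) is correct and matches the paper.
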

\begin{proof} To prove (3.8), making use of Lemma 3.1, we find
	$$\|f\|^2_{H^4(D_n)}=\|f\|^2+\|\nabla f\|^2+\|\Delta f\|^2+\|\nabla\Delta f\|^2+\|\Delta^2 f\|^2$$$$
	\leq\Big(\frac{1}{a^4}+	\frac{1}{a^3}+\frac{1}{a^2}+\frac{1}{a}+1\Big)\|\Delta^2 f\|^2.$$
	Since $a>1$, then  (3.8) follows. Similarly, (3.7) can be proved. Moreover, taking into account Lemma 2.3, we get (3.9).
\end{proof}

\begin{thm} Let 
		Let  $n$ be a natural number from the closed interval [2,7]; $D_n\in\R^n$ be a bounded smooth domain satisfying the Cone condition and $\Omega_n$ be the minimal $nD$-parallelepiped containing $\Bar{D_n}.$ Let
	\begin{align} 2a=2\sum_{i=1}^n\frac{\pi^2}{L_i^2}>3+5^{1/2},\;\;\theta=1-\frac{1}{a}-\frac{1}{a^{1/2}}>0.
	\end{align}
	Given $$u_{j0}(D_n)\in H^2(D_n)\cap H^1_0(D_n),\;j=1,...,n$$ such that
	
	\begin{align}
		\theta-\frac{2 C_s^2 7^3}{a\theta}\Big(\sum_{j=1}^n\|\Delta u_j\|^2(0)\Big)>0,
	\end{align}
	then there exists a unique global regular solution to (3.1)- (3.4):
	$$ u_j\in L^{\infty}(\R^+; H^2(D_n)\cap H^1_0(D_n))\cap L^2(\R^+;H^4(D_n)\cap H^1_0(D_n));$$$$u_{jt}\in L^2(\R^+;L^2(D_n)), \;j=1,...,n.$$ Moreover,
	\begin{align}\sum_{j=1}^n \|\Delta u_j\|^2(t)
		\leq \Big(\sum_{j=1}^n\|\Delta u_{j0}\|^2\Big)\exp\{-a^2t\theta/2\}
	\end{align}
	and
	\begin{align}
		\sum_{i=1}^n\|\Delta u_i\|^2(t)+\int_0^t\sum_{i=1}^n\|\Delta^2 u_i\|^2(\tau)d\tau \leq C\sum_{i=1}^n\|\Delta u_{i0}\|^2, \;t>0.\notag
	\end{align}
\end{thm}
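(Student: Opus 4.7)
I would use the Faedo--Galerkin method with a special basis $\{w_k\}$ of eigenfunctions of $\Delta^2$ subject to $w_k|_{\partial D_n}=\Delta w_k|_{\partial D_n}=0$. This provides finite-dimensional approximations $u_j^N$ to (3.1)--(3.4); the work then splits into obtaining $N$-uniform a priori estimates at the $H^2$-level, passing to the limit by weak-$*$ compactness in $L^\infty(\R^+;H^2)\cap L^2(\R^+;H^4)$ combined with Aubin--Lions, and finally proving uniqueness via a standard energy argument applied to the difference of two solutions.

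The core step is the $H^2$ energy estimate. Multiply the $j$th equation in (3.1) by $\Delta^2 u_j$, integrate over $D_n$, and sum over $j$. The boundary conditions $u_j=\Delta u_j=0$ on $\partial D_n$ give $\int u_{jt}\Delta^2 u_j\,dx=\tfrac12\tfrac{d}{dt}\|\Delta u_j\|^2$, while the biharmonic term contributes $\sum_j\|\Delta^2 u_j\|^2$. The destabilizing term $\int\Delta u_j\,\Delta^2 u_j\,dx=-\|\nabla\Delta u_j\|^2$ is bounded in modulus by $\tfrac{1}{a}\|\Delta^2 u_j\|^2$ via Lemma 3.1 applied to $\Delta u_j$, and the Zakharov--Kuznetsov cross term is handled by Cauchy--Schwarz together with (3.6), yielding $\tfrac{1}{a^{1/2}}\|\Delta^2 u_j\|^2$. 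Setting $E(t):=\sum_j\|\Delta u_j\|^2(t)$, these combine, by virtue of (3.10), into
\begin{equation*}
\tfrac12\tfrac{d}{dt}E(t)+\theta\sum_j\|\Delta^2 u_j\|^2\leq|\mathrm{NL}|,\qquad \theta=1-\tfrac{1}{a}-\tfrac{1}{a^{1/2}}>0.
\end{equation*}
Using the symmetry (3.2) to rewrite the semilinear term as $\sum_i u_i(u_j)_{x_i}$, then H\"older's inequality, the embedding $\|u_j\|_\infty\leq C_s\|\Delta^2 u_j\|$ from (3.9), and Young's inequality with a parameter absorbing half of the dissipation, one obtains (after a final application of $\sum_j\|\Delta^2 u_j\|^2\geq a^2 E$ from Lemma 3.1) a differential inequality of the form
\begin{equation*}
E'(t)+a^2\!\left(\theta-\frac{2C_s^2\cdot 7^3}{a\theta}E(t)\right)\!E(t)\leq 0.
\end{equation*}

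This inequality matches exactly the hypothesis of Lemma 2.4 with $\alpha=a^2\theta$, $k=2aC_s^2\cdot 7^3/\theta$, and $n=1$; condition (3.11) is precisely the positivity requirement (2.5) of that lemma. Therefore $E(t)<E(0)$ for all $t>0$, so the bracket $\theta-\tfrac{2C_s^2\cdot 7^3}{a\theta}E(t)$ remains bounded below by a positive constant (in particular by $\theta/2$ under the natural sharpening of (3.11)), reducing the inequality to the linear Gronwall form $E'(t)+(a^2\theta/2)E(t)\leq 0$ and delivering the exponential decay (3.12). Integrating the full energy inequality in time yields the remaining $L^2(\R^+;H^4)$ bound. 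Passage to the limit $N\to\infty$ is routine given these $N$-uniform bounds and a control on $(u_j^N)_t\in L^2(\R^+;L^2)$ read off from the equation; Aubin--Lions compactness then provides enough strong convergence to pass to the limit in the quadratic nonlinearity. Uniqueness follows by subtracting two solutions, testing with $u_j-\tilde u_j$, and applying Gronwall with the $H^2$-bounds already obtained. The main obstacle is calibrating the nonlinear estimate so that the effective constant in the differential inequality matches $\tfrac{2C_s^2\cdot 7^3}{a\theta}$ and the small-data hypothesis (3.11) is sharp; a secondary point is constructing a Galerkin basis respecting both boundary conditions simultaneously, the role of the ``special basis'' announced in the section title.
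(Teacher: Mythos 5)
Your proposal follows essentially the same route as the paper: a Galerkin scheme on the biharmonic eigenbasis with $w_k|_{\partial D_n}=\Delta w_k|_{\partial D_n}=0$, the multiplier $\Delta^2 u_j$, absorption of $\Delta u_j$ and $(\Delta u_j)_{x_1}$ via the Steklov-type inequalities of Lemma 3.1 to produce the coefficient $\theta$, the $L^\infty$ embedding (3.9) plus Young for the nonlinearity, Lemma 2.4 with hypothesis (3.11) to keep the bracket positive, and a standard energy/Gronwall argument for uniqueness. The only minor bookkeeping difference is that the paper reserves an extra $\tfrac{\theta}{2}\sum_j\|\Delta^2 u_j\|^2$ of dissipation before forming the bracket, so that (3.11) as stated already yields the decay rate $a^2\theta/2$ in (3.12), whereas your all-in-one-bracket form gives rate $a^2(\theta-kE(0))$ unless (3.11) is sharpened as you note.
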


\begin{proof}
	
	Let $\{w_i(x)$ be eigenfunctions of the following problem:
$$ \Delta^2 w_i -\lambda_i w_i=0,\; x\in D_n;\; w_i|_{\partial D_n}=\Delta w_i|_{\partial D_n}=0.$$ 
We  construct approximate solutions to (3.1)-(3.4) in the form
$$u^N_j(x,t)=\sum_{i=1}^N g_i^j(t)w_i(x);\;j=1,...,n.$$
Unknown functions $g_i^j(t)$\;\;satisfy the following initial problems:
\begin{align}\frac{d}{dt}(u^N_j,w_j)(t)+(\Delta u^N_j,\Delta w_j)(t)-(\nabla u^N_j, \nabla w_j)(t)&\notag\\
	-((\nabla u^N_j)_{x_1},\nabla w_{j})(t)	+\frac{1}{2}\sum_{i=1}^n((u^N_i)^2_{x_j},w_j)(t)=0,&\\
	g_i^j(0)=g_{i0}^j, \;\j=1,...,n .
\end{align}
By Caratheodory`s existence theorem, \cite{carath}, Theorems 1.2 of Chapter 1,\; there exist solutions to (3.13)-(3.14) at least locally in $t$.
All the estimates we will prove will be done on smooth solutions to (3.1)-(3.4). Naturally, the same estimates are true also for approximate solutions $u^N_j.$\\

\noi {\bf Estimate I }Multiply (3.1) by $2\Delta^2 u_j$  to obtain

\begin{align*}\frac {d}{dt}\|\Delta u_j\|^2(t)+2\|\Delta^2 u_j\|^2(t)-2\|\Delta^2 u_j\|(t)\|\Delta u_j\|(t)&\notag\\
	-2\|(\Delta u_j)_{x_1}\|(t)\|\Delta^2 u_j)\|(t)&\notag\\	+2\sum_{i=1}^n((u_i)(u_i)_{x_j},\Delta^2 u_j)(t)\leq0,\;j=1,...,n.
\end{align*}

By Lemma 3.1, $a\|\Delta u_j\|\leq \|\Delta^2 u_j\|,\;a^{1/2}\|(\Delta u_j)_{x_1}\|\leq \|\Delta^2 u_j\|,$ hence the last inequality can be rewritten as
\begin{align*}\frac {d}{dt}\|\Delta u_j\|^2(t)+2\Big(1-\frac{1}{a}-\frac{1}{a^{1/2}}\Big)\|\Delta^2 u_j\|^2(t)&\notag\\
	+2\sum_{i=1}^n((u_i)(u_i)_{x_j},\Delta^2 u_j)(t)\leq 0,\;j=1,...,n.
\end{align*}
By definition, $\theta=1-\frac{1}{a}-\frac{1}{a^{1/2}}$ must be positive. Solving the inequality
	$$1-\frac{1}{a}-\frac{1}{a^{1/2}}>0$$
		with respect to $a$, we find that $2a>3+5^{1/2}$, see (3.10). In \cite{Larkin1,Larkin3}, it was defined $a>3$. This condition is more restrictive, but more explicit. 

Making use of Lemma 3.1 and definition of $\theta$, we get

\begin{align}\frac {d}{dt}\|\Delta u_j\|^2(t)+2\theta\|\Delta^2 u_j\|^2(t) &\notag\\
	\leq	2\sum_{i=1}^n(\sup_D|u_i|(t)\|\nabla u_i\|(t))\|\Delta^2 u_j\|(t)
	,\;j=1,...,n.		
\end{align}
Taking into account Lemmas 2.5, 3.1, 3.2, we can rewrite (3.15) as

\begin{align}
	\frac{d}{dt} \|\Delta u_j\|^2(t)+2\theta\|\Delta^2 u_j\|^2(t)&\notag\\
	\leq 2\Big[\sum_{i=1}^n \sup_{D_n}| u_i(x,t)|\|\nabla u_i\|(t)\Big]\|\Delta^2 u_j\|(t)&\notag\\
	\leq
	2\Big[C_s\sum_{i=1}^n\|\Delta^2 u_i\|(t)\|\nabla u_i\|(t)\Big]\|\Delta^2 u_j\|(t); j=1,...,n.
\end{align}
Summing over $j=1,...,n$ and making use of Lemma 3.1, we  rewrite (3.16) in the form:

\begin{align*}
	\frac {d}{dt}\sum_{j=1}^n\|\Delta u_j\|^2(t)+2\theta \sum_{j=1}^n \|\Delta^2 u_j\|^2(t)&\notag\\
	\leq 2C_s n\Big(\sum_{j=1}^n\|\nabla  u_j\|(t)\Big)\Big[\sum_{j=1}^n \|\Delta^2 u_j\|^2(t)\Big]&\notag\\
	\leq \Big[\frac{\theta}{2}+\frac{2C_s^2 n^2}{\theta}\Big(\sum_{j=1}^n\|\nabla u_j\|(t)\Big)^2\Big]\sum_{j=1}^n\|\Delta^2 u_j\|^2(t)&\notag\\
	\leq \Big[\frac{\theta}{2}+\frac{2C_s^2 n^3}{\theta}\Big(\sum_{j=1}^n\|\nabla u_j\|^2(t)\Big)\Big]\sum_{j=1}^n\|\Delta^2 u_j\|^2(t)&\notag\\
	\leq \Big[\frac{\theta}{2}+\frac{2C_s^2 n^3}{a\theta}\Big(\sum_{j=1}^n\|\Delta u_j\|^2(t)\Big)\Big]\sum_{j=1}^n\|\Delta^2 u_j\|^2(t).
\end{align*}
Taking this into account, we transform (3.16) as follows:

\begin{align}
	\frac {d}{dt}\sum_{j=1}^n\|\Delta u_j\|^2(t)+\frac{\theta}{2} \sum_{j=1}^n \|\Delta^2 u_j\|^2(t)&\notag\\
	+ \Big[\theta-\frac{2C_s^2 n^3}{a\theta}\Big(\sum_{j=1}^n\|\Delta u_j\|^2(t)\Big)\Big]\sum_{j=1}^n\|\Delta^2 u_j\|^2(t)\leq 0.
\end{align}

Condition (3.11) and Lemma 2.4 guarantee that

$$\theta-\frac{2C_s^2 n^3}{a\theta}\Big(\sum_{j=1}^n\|\Delta u_j\|^2(t)\Big)>0,\; \;t>0.$$

Hence, by Lemma 3.1, (3.17) can be rewritten as
\begin{align}
	\frac {d}{dt}\sum_{j=1}^n\|\Delta u_j\|^2(t)+\frac{a^2\theta}{2}\sum_{j=1}^n \|\Delta u_j\|^2(t)\leq 0.
\end{align}
Integrating, we get

\begin{align}\sum_{i=1}^n \|\Delta u_j\|^2(t)	\leq\sum_{j=1}^n\|\Delta u_{j0}\|^2\exp\{-a^2\theta t/2\}.
\end{align}
Since
$$\theta-\frac{2C_s^2 n^3}{a\theta}\Big(\sum_{j=1}^n\|\Delta u_j\|^2(t)\Big)>0,\; \;t>0,$$

returning to (3.17), we find
\begin{align}
	\sum_{i=1}^n\|\Delta u_i\|^2(t)+\int_0^t\sum_{i=1}^n\|\Delta^2 u_i\|^2(\tau)d\tau \leq C\sum_{i=1}^n\|\Delta u_{i0}\|^2.
\end{align}
By Lemma 3.1, $a^{1/2}\|\nabla \Delta u_j\|\leq \|\Delta^2 u_j\|,$ then
 directly from (3.1),  $$(u_j)_t \in L^2(\R^+;L^2(D_n)),\;j=1,...,n.$$
 Obviously, these inclusions and estimates (3.19), (3.20) do not depend on $N$ that allow us to pass to the limit as $n\to \infty$ in (3.13), (3.14) and to prove the existence part of Theorem 3.1.

\begin{lemma} A regular solution of Theorem 3.1 is uniquely defined.
\end{lemma}
\begin{proof}\begin{proof} Let $u_j$ and $v_j$,\;$j=1,...,n$ be two distinct solutions to (3.1)-(3.4). Denoting $w_j=u_j-v_j$,	we come to the following system:
		\begin{align}
			(w_j)_t+\Delta^2 w_j+\Delta w_j +\Delta(u_j)_{x_1} +\frac{1}{2}\sum_{i=1}^n\Big(u^2_i-v^2_i\Big)_{x_j} =0,&\\	
			(w_j)_{x_i}=(w_i)_{x_j},\;i\ne j,&\\
			w_j|_{\partial D_n}=\Delta w_j|_{\partial D_n}
			=0,\; t>0;&\\
			w_j(x,0)=0 \;\text{in} \;D_n,\;\;j=1,...,n.
		\end{align}
		Multiply (3.21) by $2w_j$ to obtain
		
		\begin{align} \frac{d}{dt}\|w_j\|^2(t)+2\|\Delta w_j\|^2(t)-2\|\nabla w_j\|^2(t)&\notag\\
			-2\|\nabla (w_i)_{x_1}\|(t)\|\nabla w_j\|(t)\leq\sum_{i=1}^n(|\{u_i+v_i\}w_i,(w_j)_{x_j}|)(t).
		\end{align}
		
		Making use of Lemmas 2.3, 3.1, 3.2, we estimate
		$$I_1=2\|\nabla (w_i)_{x_1}\|(t)\|\nabla w_j\|(t)\leq 2\frac{1}{a^{1/2}}\|\Delta w_j\|^2(t),$$$$2\|w_j\|^2(t)\leq
		\frac{2}{a}\|\Delta w_j\|^2(t).$$
$$		I_2=\sum_{i=1}^n(|\{u_i+v_i\}w_i,(w_j)_{x_j}|) \leq \sum_{i=1}^n(\sup_{D_n}|\{u_i+v_i\}|\|w_i\|\|(w_j)_{x_j}\|)$$
		$$\leq C_s\sum_{i=1}^n\|w_i\|\Big(\|\Delta^2 (u_i+v_i)\|\Big)\|\nabla (w_j)\|$$
		$$\leq \frac{C_s}{a^{1/2}}\sum_{i=1}^n\|w_i\|\Big(\|\Delta^2 (u_i+v_i)\|\Big)\|\Delta w_j\|$$
				$$\leq \frac{\epsilon}{2}\|\Delta w_j\|^2	+\frac{nC_s^2}{2a\epsilon}\Big[\sum_{i=1}^n\{\|\Delta^2 u_i\|^2+\|\Delta^2 v_i\|^2\}\Big]\sum_{j=1}^n\|w_j\|^2.$$ Here $\epsilon$ is an arbitrary positive number. Substituting $I_1, I_2$ into (3.25), we get

		\begin{align}
			\frac{d}{dt}\|w_j\|^2(t)+(2\theta-\frac{\epsilon}{2})\|\Delta w_j\|^2(t)&\notag\\
			\leq \frac{nC_s^2}{2a\epsilon}\Big[\sum_{i=1}^n\{\|\Delta^2 u_i\|^2+\|\Delta^2 v_i\|^2\}\Big]\sum_{j=1}^n\|w_j\|^2.
		\end{align}
		Taking $\epsilon=4\theta$ and summing up over $j=1,...,n,$  we transform (3.26) as follows:
		\begin{align*}\frac{d}{dt}\sum_{j=1}^n\|w_j\|^2(t)\leq C \Big[\sum_{i=1}^n\{\|\Delta^2 u_i\|^2+\|\Delta^2 v_i\|^2\}\Big]\sum_{j=1}^n\|w_j\|^2,\;i=1,..., n.
		\end{align*}
		
		Since by (3.20),$$\|\Delta^2 u_i\|^2(t), \|\Delta^2 v_i \|^2(t)\in L^1(\R^+),$$
		 then by  Lemma 2.2, 

		$$ \|w_j\|^2(t)\equiv 0\;\;j=1,...,n,\; \text{for all}\;\; t>0.$$
		Hence
		\begin{align*}
			u_j(x,t)\equiv v_j(x,t);j=1,...,n.
		\end{align*}
		The proof of Lemma 3.3, and consequently, Theorem 3.1 are complete.
		
	\end{proof}
	
\end{proof} 
\end{proof}	
{\Large }

\section{ Conclusions}

In this work,  we studied the initial-boundary value problem for the n-dimensional Kuramoto-Sivashinsky-Zakharov-Kuznetsov system (1.2), (1.3) posed on bounded smooth domains $D_n\in\R^n.$. The problem on a bounded domains $D_n$  has homogeneous  conditions for a solution and the Laplace operator of the solution on the boundaries of the domains. Making use of a special base, we established the existence, uniqueness and exponential decay of the $H^2$-norm as well as "smoothing effect": for initial data from $H^2(D_n)\cap H^1_0(D_n)$, we  proved that  for all natural $n\in[2,7]$  solutions pertained to $L^2(\R^+;H^4(D_n)).$ A set of admissible domains  has been defined which eliminate  destabilizing effects of terms $\Delta u_j$ by dissipation of $\Delta^2 u_j.$  \\
 Since these problems do not admit the first  estimate independent of $t$ and solutions, in order to prove the existence of global  solutions, we put conditions  connecting geometrical properties of domains with initial data, see  (3.11).\par For $n=2,3,$ initial-boundary value problems have been studied in \cite{Larkin1,Larkin3}, nevertheless, we included these cases because they are covered by the same scheme and are special cases of the general results.

\section*{Conflict of Interests}

The author declares that there is no conflict of interest regarding the publication of this paper.

\medskip

\bibliographystyle{torresmo}

\end{document}